\newcommand   \Xset  {\mathbb{X}}
\newcommand   \XX    {\Xset}
\newcommand   \RR    {\mathbb{R}}
\renewcommand \P   {\mathsf{P}}
\newcommand   \HH    {\mathcal{H}}
\newcommand   \FF    {\mathcal{F}}
\newcommand   \GG    {\mathcal{G}}
\newcommand   \Acal  {\mathcal{A}}
\newcommand   \Ical  {\mathcal{I}}
\newcommand   \Bcal  {\mathcal{B}}
\newcommand \EE       {\mathsf{E}}
\newcommand \defeq  {\mathrel{\mathop:}=}
\DeclareMathOperator \Span {{\rm span}}
\DeclareMathOperator* \argmax {argmax}
\newcommand{\dotvar}{\,\bm{\cdot}\,}
\providecommand{\abs}[1]{\lvert#1\rvert}
\providecommand{\ns}[1]{\lVert#1\rVert}
\providecommand{\bns}[1]{\big\lVert#1\big\rVert}
\newcommand \dx    {\ddiff x}
\renewcommand{\hat}{\widehat}
\newcommand \xihat {\hat{\xi}}
\newcommand \Xvn[1][]  {\underline{X\mskip -2mu}_n^{\mskip 1.5mu #1}\mskip 1mu}
\newcommand   \Shat      {\hat S}
\theoremstyle{plain}
\newtheorem{thm}{Theorem}
\newtheorem{prop}[thm]{Proposition}
\theoremstyle{definition}
\newcommand{\ISItitle}[1]{\vskip 0pt\setlength{\parindent}{0cm}\Large\textbf{#1}\normalsize\vskip 12pt}
\begin{document}
\ISItitle{Sequential search based on kriging: convergence analysis of some algorithms}
\noindent
\begin{minipage}{1.0\linewidth}
Vazquez, Emmanuel \\[1ex]
Bect, Julien \\[1.2ex]
\it
SUPELEC, Gif-sur-Yvette, France\\ 
e-mail: emmanuel.vazquez@supelec.fr, julien.bect@supelec.fr 
\end{minipage}

\normalsize\rm\setlength{\parindent}{0.7cm}\setlength{\parskip}{3pt}

\section{Introduction}

Let $\FF$ be a set of real-valued functions on a set $\XX$ and let
$S:\FF \to \GG$ be an arbitrary mapping. We consider the problem of
making inference about $S(f)$, with $f\in\FF$ unknown, from a finite set
of pointwise evaluations of $f$.  We are mainly interested in the
problems of approximation and optimization. Formally, a deterministic
algorithm to infer a quantity of interest $S(f)$ from a set of $n$
evaluations of $f$ is a pair~$\left( \Xvn, \Shat_n \right)$ consisting
of a deterministic \emph{search strategy}
\begin{equation*}
  \Xvn : f  \mapsto \Xvn(f) = (X_1(f), X_2(f),\ldots,
  X_n(f))\in\XX^n\,,
\end{equation*}
and a mapping $\Shat_n : \FF \to \GG$, such that:
\begin{enumerate}[a)]
\item $X_1(f) = x_1$, for some arbitrary $x_1 \in \XX$
\item For all $1 \leq i < n$, $X_{i+1}(f)$ depends measurably on
  $\Ical_{i}(f)$, where $\Ical_{i} = \left( \left(X_1, Z_1 \right),
    \ldots, \left( X_i, Z_i \right) \right)$, and $Z_i(f) = f(X_i(f))$,
  $1 \le i \le n$.
\item There exists a measurable function $\phi_n$ such that
  $\Shat_n=\phi_n\circ \Ical_n$.
\end{enumerate}
The algorithm~$\left( \Xvn, \Shat_n \right)$ describes a {sequence of
  decisions}, made from an increasing amount of information: for each
$i=1,\ldots, n-1$, the algorithm uses information $\Ical_{i}(f)$ to
choose the next evaluation point~$X_{i+1}(f)$. The estimator
$\Shat_n(f)$ of~$S(f)$ is the {terminal decision}. We shall denote by
$\Acal_n$ the class of all strategies $\Xvn$ that query sequentially $n$
evaluations of $f$ and also define the subclass $\Acal_n^0\subset
\Acal_n$ of non-adaptive strategies, that is, the class of all
strategies such that the $X_i$s do not depend on $f$. 

A classical approach to study the performance of a sequential strategy
is to consider the worst error of estimation on some class of
functions $\FF$
\begin{equation*}
  \epsilon_{\rm worst case}(\Xvn) \defeq \sup_{f\in\FF} L(S(f), \Shat_n(f))\,,
\end{equation*}
where $L$ is a loss function.  There are many results dealing with the
problems of function approximation and optimization in the worst case
setting. Two noticeable results concern convex and symmetric classes of
bounded functions. For such classes, from a worst-case point of view, any
strategy will behave similarly for the problem of global optimization
and that of function approximation. Moreover the use of adaptive methods can not
be justified by a worst case analysis \citep[see,~e.g.,][Propositions
1.3.2 and 1.3.3]{novak88:_deter}. These results, combined with the fact
that most optimization algorithms are adaptive, lead to think that the
worst-case setting may not be the most appropriate framework to assess
the performance of a search algorithm in practice. Indeed, it would be
also important, in practice, to know whether the loss $L(S(f),
\Shat_n(f))$ is close to, or on the contrary much smaller than
$\epsilon_{\rm worstcase}$, for ``typical'' functions $f\in\FF$ not
corresponding to worst cases. To address this question, a classical
approach is to adopt a Bayesian point of view.

In this paper, we consider methods where $f$ is seen as a sample path of
a real-valued random process $\xi$ defined on some probability
space $(\Omega, \Bcal, \P_0)$ with parameter in $\XX$. Then, $\Xvn(\xi)$
is a random sequence in~$\XX$, with the property that
$X_{n+1}(\xi)$ is measurable with respect to  the $\sigma$-algebra
generated by $\xi(X_1(\xi))$, \ldots, $\xi(X_n(\xi))$.
From a Bayesian decision-theoretic point of view, the random
process represents prior knowledge about $f$ and makes it possible to
infer a quantity of interest before evaluating the function. This point
of view has been widely explored in the domain of optimization and
computer experiments.  Under this setting, the performance of a given strategy
$\Xvn$ can be assessed by studying the average loss
\begin{equation*}
  \epsilon_{\rm average}(\Xvn) \defeq \EE\, L(S(\xi), \Shat_n(\xi))\,.
\end{equation*}
How much does adaption help on the average, and is it possible to derive
rates of decay for errors in average? In this article, we shall make a
brief review of results concerning average error bounds of Bayesian search
methods based on a random process prior. 

This article has three parts. The precise assumptions about $\xi$ are
given in~Section~\ref{sec:kriging}. Section~\ref{sec:approximation}
deals with the problem of function approximation, while
Section~\ref{sec:optim} deals with the problem of optimization.

\section{Framework}
\label{sec:kriging}

Let $\xi$ be a random process defined on a probability space~$(\Omega,
\Bcal, \P_0)$, with parameter $x \in \RR^d$.  Assume moreover that $\xi$
has a zero mean and a continuous covariance function.  The kriging
predictor of~$\xi(x)$, based on the observations $\xi(X_i(\xi))$,
$i=1,\ldots, n$, is the orthogonal projection
\begin{equation}
  \label{eq:def-krig-pred}
  \xihat_n(x) \;\defeq\; \sum_{i=1}^n \lambda^i(x;\Xvn(\xi))\, \xi(X_i(\xi))  
\end{equation}
of $\xi(x)$ onto $\Span\{\xi(X_i(\xi)),i=1,\ldots ,n\}$
in~$L^2(\Omega,\Bcal,\P_0)$. At step $n\geq 1$, given evaluation points
$\Xvn(\xi)$, the kriging coefficients $\lambda^i(x;\Xvn(\xi))$ can be
obtained by solving a system of linear equations
\citep[see,~e.g.,][]{chiles99}. Note that for any sample path
$f=\xi(\omega,\dotvar)$, $\omega\in\Omega$, the value~$\xihat_n(\omega,
x)$ is a function of $\Ical_n(f)$ only.

The mean-square error (MSE) of estimation at a fixed point $x\in\RR^d$ will be
denoted by 
\begin{equation*}
  \sigma^2_n(x) \defeq \EE \{ (\xi(x)- \hat\xi(x;\Xvn(\xi)))^2\}\,.  
\end{equation*}
It is generally not possible to compute $\sigma^{2}_n(x)$ when
$\Xvn$ is an adaptive strategy.

\noindent
{\bf Regularity assumptions.} Assume that there exists
$\Phi:\RR^d\to \RR$ such that $k(x,y)=\Phi(x-y)$, which is in
$L^2(\RR^d)$ and has a Fourier transform
\begin{equation*}
  \tilde \Phi (u) = (2\pi)^{-d/2} \int_{\RR^d} \Phi(x) e^{i(x, u)}dx
\end{equation*}
that satisfies
\begin{equation}
  \label{eq:sobolev_cond}
  c_1 (1 + \ns{u}_2^2)^{-s} \leq \tilde \Phi(u) \leq c_2 (1 +
  \ns{u}_2^2)^{-s}\,,\quad u\in\RR^d\,,
\end{equation}
with $s>d/2$ and constants $0 < c_1 \leq c_2$.  Note that the Mat\'ern
covariance with regularity parameter~$\nu$ \citep[see,~e.g.,][]{Ste99}
satisfies such a regularity assumption, with $s = \nu +
d/2$. Tensor-product covariance functions, however, never satisfy such
a condition \cite[see][chapter~7, for some results in this
case]{ritter:2000:average}.

Let~$\HH$ be the RKHS of functions generated by~$k$. Denote
by~$(\dotvar, \dotvar)_{\HH}$ the inner product of~$\HH$, and
by~$\ns{\dotvar}_{\HH}$ the corresponding norm. It is well known
\citep[see, e.g.][]{wendland05:_scatt_data_approx} that
$\HH$ is the Sobolev space
\begin{equation*}
  W_2^s(\RR^d) = \left\{
    f \in L^2(\RR^d);\; \tilde f(\dotvar) (1+
    \ns{\dotvar}_2^2)^{s/2} \in L^2(\RR^d)
  \right\} 
\end{equation*}
due to the following result.
\begin{prop}
  \label{prop:regul-assumpt}
  $\HH \subset L^2(\RR^d)$ and
  $\forall f\in\HH$, 
  \begin{equation*}
    \ns{f}_\HH^2 = \int_{\RR^d} \abs{\tilde
      f(u)}^2\, \tilde \Phi(u)^{-1}\ du\,.    
  \end{equation*}
  $\ns{f}_{\HH}^2$ is equivalent to the Sobolev norm
  \begin{equation*}
    \ns{f}^2_{W_2^s(\RR^d)} = \ns{\tilde f(\dotvar)
      \left(1+\ns{\dotvar}_2^2\right)^{s/2}}_{L^2(\RR^d)}    
  \end{equation*}
\end{prop}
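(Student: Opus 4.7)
My plan is to argue in three stages: first identify the RKHS norm with the Fourier-based integral on the natural dense subspace generated by translates of $k$, then use the sandwich bound~\eqref{eq:sobolev_cond} to pass to the Sobolev norm, and finally close everything up by a density/completion argument.

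Stage one. Consider the pre-Hilbert space $\HH_0$ of finite linear combinations $f = \sum_{i=1}^N c_i\, k(\dotvar, x_i) = \sum_i c_i\, \Phi(\dotvar - x_i)$, equipped with the usual reproducing-kernel inner product $\bigl(k(\dotvar,x), k(\dotvar,y)\bigr)_\HH = k(x,y)$. For such $f$ the Fourier transform is $\tilde f(u) = \bigl(\sum_i c_i e^{i(x_i,u)}\bigr)\, \tilde \Phi(u)$, so $\lvert \tilde f(u)\rvert^2 \tilde \Phi(u)^{-1} = \bigl\lvert \sum_i c_i e^{i(x_i,u)}\bigr\rvert^2 \tilde \Phi(u)$, and integrating over $\RR^d$ and using the Fourier representation of $\Phi$ gives, up to the Fourier normalization constant, $\sum_{i,j} c_i c_j\, \Phi(x_i - x_j) = \ns{f}_\HH^2$. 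Thus on $\HH_0$ the claimed integral formula holds.

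Stage two. The bound~\eqref{eq:sobolev_cond} immediately gives, for every $f\in\HH_0$,
\begin{equation*}
  c_2^{-1}\, \ns{f}_{W_2^s(\RR^d)}^2 \;\le\; \int_{\RR^d} \lvert \tilde f(u)\rvert^2 \tilde\Phi(u)^{-1}\,\du \;\le\; c_1^{-1}\, \ns{f}_{W_2^s(\RR^d)}^2,
\end{equation*}
i.e.\ $\ns{\dotvar}_\HH$ is equivalent to $\ns{\dotvar}_{W_2^s}$ on $\HH_0$. Since $s>d/2$, Sobolev embedding yields $W_2^s(\RR^d)\subset L^2(\RR^d)$ (and in fact a continuous embedding into $C_0(\RR^d)$), so $\HH_0 \subset L^2(\RR^d)$ with continuous inclusion for the $\HH$-norm.

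Stage three. The RKHS $\HH$ is by construction the completion of $\HH_0$ under $\ns{\dotvar}_\HH$. By the two-sided estimate above, this completion coincides with the completion of $\HH_0$ under $\ns{\dotvar}_{W_2^s}$, which is a closed subspace of $W_2^s(\RR^d)$. To conclude that it is all of $W_2^s(\RR^d)$, I would check that $\HH_0$ is dense in $W_2^s(\RR^d)$: by Plancherel this reduces to showing that the functions $u\mapsto\bigl(\sum_i c_i e^{i(x_i,u)}\bigr)\tilde\Phi(u)$ are dense in $L^2\bigl(\RR^d,(1+\ns{u}_2^2)^s\,\du\bigr)$, which, since $\tilde\Phi>0$ is sandwiched by~\eqref{eq:sobolev_cond}, is equivalent to the density of finite sums of characters $\sum_i c_i e^{i(x_i,\dotvar)}$ in $L^2$ of any polynomially decaying measure — a standard fact (e.g.\ via a Fourier/Stone–Weierstrass argument, or by approximating Schwartz functions whose Fourier transforms are compactly supported). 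Passing integrals/norms to the limit along a Cauchy sequence in $\HH_0$ then extends both the integral identity and the norm equivalence from $\HH_0$ to all of $\HH = W_2^s(\RR^d)$, and in particular gives $\HH \subset L^2(\RR^d)$.

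The main obstacle is really this last density step: everything else is a direct computation or a comparison of weights. Once density of trigonometric sums in a weighted $L^2$ space is granted, the identification $\HH = W_2^s(\RR^d)$ with equivalent norms and the integral representation follow by completion.
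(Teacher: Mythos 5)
Your argument is correct and is essentially the standard ``native space'' characterization that the paper itself does not spell out but delegates to the cited reference (Wendland): compute the RKHS norm on the span of kernel translates via Fourier inversion, use the two-sided bound~\eqref{eq:sobolev_cond} to obtain equivalence with the $W_2^s(\RR^d)$ norm, and conclude by the density/completion step (the density of trigonometric sums in $L^2$ of the finite measure with density $\asymp(1+\ns{u}_2^2)^{-s}$ follows from the injectivity of the Fourier transform of finite measures, and the $\HH$-limit and the $W_2^s$-limit of a Cauchy sequence agree because $s>d/2$ makes both convergences pointwise). The only cosmetic point is the one you already flag: with the paper's normalization of the Fourier transform the integral identity holds only up to a factor $(2\pi)^{d/2}$, which is immaterial for the norm equivalence and for the inclusion $\HH\subset L^2(\RR^d)$.
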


\section{Approximation}
\label{sec:approximation}

\def \eMMSE {\epsilon_\textsc{mmse}}
\def \eIMSE {\epsilon_\textsc{imse}}
\def \dx {\mathrm{d}x}

We first consider the problem of approximation, with the point of view
exposed in Section~\ref{sec:kriging}. Using the notations introduced
above, the problem of approximation corresponds to considering
operators $S$ and $\Shat_{n}$ defined by $S(\xi) \defeq \xi_{\,|\XX}$
and $\Shat_n(\xi) \defeq \xihat_{n\,|\XX}\,$, with $\XX\subset \RR^d$
a compact domain with non-empty interior. For the design of computer
experiments, classical criteria for assessing the quality of a
strategy $\Xvn \in \Acal_n$ for the approximation problem are the
maximum mean-square error (MMSE)
\begin{equation*}
  \eMMSE (\Xvn) 
  \;\defeq\; \sup_{x\in\XX}\, \EE\left( \big(\xi(x) -
    \xihat_n(x)\big)^2 \right)
  \;=\;  \sup_{x\in\XX}\, \sigma^2_n(x)
\end{equation*}
and the
integrated mean-square error (IMSE) 
\begin{equation*}
  \eIMSE (\Xvn) 
  \;\defeq\; \EE\left( \ns{\xi - \xihat_n}_{L^2(\XX,\mu)}^2 \right)
  \;=\; \int_{\XX} \sigma_n(x)^2\, \mu(\dx)
\end{equation*}
\citep[see,
e.g.,][]{sacks:89:dace,currin:1991:bpdf,welch_screening_1992,
  santner:2003:dace}. These criteria correspond to $G$-optimality and
$I$-optimality in the theory of (parametric) optimal design.

As mentioned earlier, computing $\sigma_n^2(x)$ is usually not possible
in the case of adaptive sampling strategies, even for a Gaussian
process. From a theoretical point of view, however, it is important to
know if adaptive strategies can improve upon non-adaptive strategies for
the approximation problem.

\begin{prop}
  \label{prop:adapt}
  Assume that $\xi$ is a Gaussian process.  Then adaptivity does not
  help for the approximation problem, with respect to either the MMSE
  or the IMSE criterion.
\end{prop}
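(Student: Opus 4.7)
The plan is to exploit the characteristic Gaussian property that the conditional variance of $\xi(x)$ given observations at any fixed locations depends only on those locations, not on the observed values. This allows one to reduce any adaptive scheme to a non-adaptive one.

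The key first step I would carry out is the identity
\begin{equation*}
  \sigma_n^2(x) \;=\; \EE\bigl[\tau_n^2(x;\Xvn(\xi))\bigr],
\end{equation*}
where $\tau_n^2(x;\x)$ denotes the kriging variance at $x$ for a fixed deterministic design $\x\in\XX^n$. This would follow by conditioning on the full history $\Ical_n(\xi)$: the kriging estimator $\xihat_n(x)$ is $\Ical_n$-measurable, while, conditionally on $\Ical_n(\xi)$ (which specifies both the evaluation locations and the observed values), the residual $\xi(x)-\xihat_n(x)$ is centered Gaussian with variance $\tau_n^2(x;\Xvn(\xi))$ by the standard Gaussian conditioning formula applied with the (now specified) design.

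For the IMSE, Fubini then yields $\eIMSE(\Xvn)=\EE[J(\Xvn(\xi))]$ with $J(\x)\defeq\int_{\XX}\tau_n^2(x;\x)\,\mu(\mathrm{d}x)$. Picking $\x^*\in\argmin J$ produces a deterministic non-adaptive strategy in $\Acal_n^0$ with $\eIMSE(\x^*)=J(\x^*)\le J(\Xvn(\xi))$ for every $\omega$, so $\eIMSE(\Xvn)\ge\eIMSE(\x^*)$. For the MMSE, the same manipulation yields
\begin{equation*}
  \eMMSE(\Xvn) \;=\; \sup_{x\in\XX}\int \tau_n^2(x;\x)\,\mu_X(\mathrm{d}\x),
\end{equation*}
where $\mu_X$ is the law of $\Xvn(\xi)$. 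Here one cannot replace $\mu_X$ directly by a Dirac mass, since randomizing a design can strictly lower a supremum over~$x$; instead, I would exhibit the non-adaptive strategy that draws a design from $\mu_X$ independently of $\xi$, which belongs to $\Acal_n^0$ (its choice of points does not depend on~$f$) and achieves exactly the same MMSE, yielding $\inf_{\Acal_n}\eMMSE\ge\inf_{\Acal_n^0}\eMMSE$.

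The main obstacle is making the identity $\sigma_n^2(x)=\EE[\tau_n^2(x;\Xvn(\xi))]$ rigorous: since $\Xvn(\xi)$ is itself a functional of $\xi$, one has to verify that, conditionally on $\Ical_n(\xi)$, the law of the unobserved part of $\xi$ coincides with the kriging posterior associated with the (now specified) deterministic design. Once this is granted, both halves of the statement reduce to the elementary manipulations above.
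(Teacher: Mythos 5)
Your reduction is essentially the paper's: the identity $\sigma_n^2(x)=\EE\bigl[\tau_n^2(x;\Xvn(\xi))\bigr]$ is exactly the paper's equation~\eqref{eq:adapt-trick} (proved there by induction on $i$, using that $X_{i+1}$ is $\Ical_i$-measurable and that Gaussian conditional covariances do not depend on the observed values --- the same verification you flag as the ``main obstacle''), and your IMSE argument (Fubini, then replace the design distribution by a minimizing deterministic design) is the paper's argument verbatim. That half is correct and complete.

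The MMSE half is where you have a genuine gap, although you deserve credit for putting your finger on exactly the delicate point: after the reduction one faces $\sup_x \int \tau_n^2(x;\x)\,\mu_X(\ddiff\x)$, and a sup of averages cannot simply be bounded below by the best deterministic design. Your proposed repair, however, does not close the gap: the strategy that draws $\x\sim\mu_X$ independently of $\xi$ is a \emph{randomized} non-adaptive strategy, whereas $\Acal_n^0$ as defined in the introduction is a subclass of the deterministic algorithms $\Acal_n$ ($X_1(f)=x_1$ fixed, each $X_{i+1}$ a function of $\Ical_i$ alone, no auxiliary randomness). What you prove is therefore $\inf_{\Acal_n}\eMMSE \ge \inf_{\mu_X}\sup_x\int\tau_n^2(x;\x)\,\mu_X(\ddiff\x)$ over achievable mixtures, and this right-hand side can be \emph{strictly smaller} than $\inf_{\Acal_n^0}\eMMSE=\inf_{\x}\sup_x\tau_n^2(x;\x)$: mixing designs can beat every single design for a sup-of-averages criterion (the usual failure of $\inf\sup=\sup\inf$ without a saddle point), and an adaptive strategy can realize nondegenerate mixtures from step~$2$ onward since $X_2$ may be an arbitrary measurable function of the continuous random variable $\xi(x_1)$. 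De-randomization is thus not a formality; it is the entire content of the MMSE claim, and your argument leaves it open. (To be fair, the paper's own one-line bound $\sigma_n^2(x)\ge\min_{\x}\sigma^2(x;\x)$ is even weaker --- its right-hand side vanishes by placing a design point at $x$ --- so you are not behind the paper here.) For what is actually used downstream, namely that no adaptive strategy beats the rate $n^{-2\nu/d}$ in MMSE, you can bypass the issue entirely: your IMSE conclusion combined with $\eIMSE(\Xvn)\le\mu(\XX)\,\eMMSE(\Xvn)$ gives $\eMMSE(\Xvn)\ge C_1\,n^{-2\nu/d}/\mu(\XX)$ for every adaptive $\Xvn$.
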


\begin{proof}
  For any adaptive strategy~$\Xvn$, it can be proved by induction (using
  the fact that~$X_{i+1}$ only depends on~$\mathcal{I}_i$) that, for
  each $x \in \Xset$,

  \begin{equation}
    \label{eq:adapt-trick}
    \sigma_n^2(x) \;=\; \EE\left(
      \sigma^2(x;X_1(\xi),\ldots,X_n(\xi))
    \right) \,,
  \end{equation}
  where $\sigma^2(x ;x_1,\ldots,x_n)$, $x_1,\ldots,x_n \in \XX$, denotes the MSE at~$x$ of the
  non-adaptive strategy that selects the points $x_1, \ldots,
  x_n$. Therefore, for each $x \in \Xset$,
  \begin{equation*}
    \sigma_n^2(x) \;\ge\; \min_{x_1,\,\ldots,\,x_n\, \in\, \XX}\,
    \sigma^2(x;x_1,\,\ldots,\,x_n) \,,
  \end{equation*}
  which proves the claim in the case of the MMSE criterion. Similarly,
  integrating~\eqref{eq:adapt-trick} yields
  \begin{align*}
    \int_\Xset \sigma_n^2\, d\mu
    & \;=\; \EE\left\{
      \int_\Xset \sigma^2(x;\Xvn(\xi))\, \mu(\dx)
    \right\} \\
    & \;\ge\; \min_{x_1,\,\ldots,\,x_n\, \in\, \Xset}\,
    \int_\Xset \sigma^2(x;\,x_1,\,\ldots,\,x_n)\, \mu(\dx) \,,
  \end{align*}
  which proves the claim in the case of the IMSE criterion.
\end{proof}

In the case of the IMSE criterion, Proposition~\ref{prop:adapt} can be
seen as a special case of a general result about linear
problems \citep[see, e.g.,][Chapter~7]{ritter:2000:average}. The
following proposition establishes a connection between the MMSE
criterion and the worst-case $L^{\infty}$-error of approximation in the
unit ball of~$\HH$, which will be useful to establish the optimal rate
for IMSE- and MMSE-optimal designs.

\begin{prop}
  \label{prop:mmse-unit-ball}
  Let $\HH_1$ denote the unit ball of~$\HH$.  For any non-adaptive
  strategy $\Xvn\in\Acal_n^0$, the MMSE criterion equals the squared
  worst-case $L^{\infty}$-error of approximation in~$\HH_1$ using~$\Shat_n$:
  \begin{equation*}
    \eMMSE (\Xvn) = \left(\; \sup_{f \in \HH_1}
      \ns{S(f) - \Shat_n(f)}_{L^\infty(\XX)} 
      \;\right)^2 \,.
  \end{equation*}
\end{prop}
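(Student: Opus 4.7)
The plan is to prove the pointwise identity
\begin{equation*}
  \sigma_n^2(x) \;=\; \sup_{f\in\HH_1} \bigl\lvert f(x) - \Shat_n(f)(x) \bigr\rvert^2
  \qquad \text{for every } x \in \XX,
\end{equation*}
and then take the supremum in $x$. Because $\Xvn\in\Acal_n^0$ is non-adaptive, the evaluation points $x_1,\ldots,x_n$ are deterministic and the kriging weights $\lambda^i(x) = \lambda^i(x;x_1,\ldots,x_n)$ are deterministic functions of $x$ obtained by solving the kriging system based solely on the covariance $k$. In particular, the same weights define both $\Shat_n(f)(x) = \sum_i \lambda^i(x) f(x_i)$ for $f\in\HH$ and the kriging predictor of $\xi(x)$.

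The heart of the argument is the RKHS computation. I would introduce the representers $k_x = k(x,\dotvar)\in\HH$ and the error element
\begin{equation*}
  r_x \;\defeq\; k_x - \sum_{i=1}^n \lambda^i(x)\, k_{x_i} \;\in\;\HH.
\end{equation*}
On the one hand, expanding the mean-square error using the covariance and using $k(y,z)=(k_y,k_z)_\HH$ gives $\sigma_n^2(x) = \ns{r_x}_\HH^2$. On the other hand, the reproducing property yields, for every $f\in\HH$,
\begin{equation*}
  f(x) - \Shat_n(f)(x) \;=\; (f, k_x)_\HH - \sum_{i=1}^n \lambda^i(x)(f, k_{x_i})_\HH \;=\; (f, r_x)_\HH .
\end{equation*}
Cauchy--Schwarz then gives $\lvert f(x) - \Shat_n(f)(x)\rvert \le \ns{f}_\HH \ns{r_x}_\HH$, with equality for $f = r_x / \ns{r_x}_\HH \in \HH_1$ (when $r_x\neq 0$; the case $r_x=0$ is trivial). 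This proves the pointwise identity.

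To conclude, since $t\mapsto t^2$ is nondecreasing on $[0,\infty)$ and suprema commute,
\begin{equation*}
  \Bigl(\sup_{f\in\HH_1} \ns{S(f)-\Shat_n(f)}_{L^\infty(\XX)}\Bigr)^2
  = \sup_{x\in\XX}\sup_{f\in\HH_1} \bigl\lvert f(x)-\Shat_n(f)(x)\bigr\rvert^2
  = \sup_{x\in\XX}\sigma_n^2(x) = \eMMSE(\Xvn).
\end{equation*}
The main (mild) obstacle is the first identity $\sigma_n^2(x)=\ns{r_x}_\HH^2$: it relies on identifying the $L^2(\Omega,\Bcal,\P_0)$ inner product of $\xi(y),\xi(z)$ with $(k_y,k_z)_\HH$, which is standard but should be stated explicitly via Proposition~\ref{prop:regul-assumpt} (or the general kernel--process isometry). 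Everything else is a routine Riesz representation computation, and non-adaptivity is used only to ensure that the $x_i$ are fixed so that $r_x$ is a bona fide element of $\HH$ independent of the sample path.
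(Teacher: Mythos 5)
Your proof is correct and follows essentially the same route as the paper: both identify $\sigma_n(x)$ with the $\HH$-norm of the error representer $k(x,\dotvar)-\sum_i\lambda^i(x)k(x_i,\dotvar)$ via the process--kernel isometry, and then recognize that norm as the supremum of $(f,\,\dotvar\,)_{\HH}$ over the unit ball, which by the reproducing property is $\sup_{f\in\HH_1}(f-\Shat_n f)(x)$. Your version merely makes the Cauchy--Schwarz duality step and the equality case explicit, which the paper leaves implicit.
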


\begin{proof}
  Let $\Xvn \in \Acal_n^0$ be a non-adaptive strategy such that
  $X_i(\xi) = x_i$, $i=1,\ldots,n$, for some arbitrary $x_i$s in
  $\XX$. Denote by $\lambda_i(x) = \lambda_i(x;\Xvn(\xi))$ the
  corresponding kriging coefficients (which do not depend on~$\xi$). Using the
  fact that the mapping $\xi(x) \mapsto k(x,\dotvar)$ extends linearly
  to an isometry from $\overline\Span\{ \xi(y),\, y\in\RR^d \}$ to
  $\HH$, we have for all $x\in\XX$
  \begin{align*}
    \sigma_n(x)  
    & \;=\; \bns{ \xi(x)  - \xihat_n(x) }_{L^2(\Omega,\Bcal,\P_0)} \\
    & \;=\;  \bns{ k(x,\dotvar) - {\sum}_i\,  \lambda^i(x)\,
      k(x_i,\dotvar) }_{\HH} \\
    & \;=\; \sup_{f \in \HH_1} \left(\, f \,,\, k(x,\dotvar) - {\sum}_i\, 
    \lambda^i(x)\, k(x_i,\dotvar) \,\right)_{\HH}\,. \\
    & \;=\; \sup_{f \in \HH_1} (f - \Shat_n f)(x) \,.
  \end{align*}
  Thus, 
  \begin{equation*}
    \sup_{x\in\XX} \sigma_n(x) 
    \;=\; \sup_{f \in \HH_1} \sup_{x\in\XX}\; (f - \Shat_n f)(x)
    \;=\; \sup_{f \in \HH_1} \bns{ f - \Shat_nf }_{L^\infty(\XX)} \,.
  \end{equation*}
\end{proof}

The following proposition summarizes known results concerning the optimal
rate of decay in the class of non-adaptive strategies for both the
IMSE criterion and the MMSE criterion. Note that, by
Proposition~\ref{prop:adapt}, this rate is also the optimal rate of
decay in the class of all adaptive strategies if~$\xi$ is a Gaussian
process.

\begin{prop}
  \label{prop:rate-mmse-crit}
  Assume that $\xi$ has a continuous covariance function satisfying
  the regularity assumptions of Section~\ref{sec:kriging}, and let $\nu =
  s - d/2 > 0$. Then there exists $C_1 > 0$ such that, for any
  $\Xvn \in \Acal_n^0$,
  \begin{equation}
    \label{eq:rate:1}
    C_1\, n^{-2\nu/d} \;\le\; \eIMSE(\Xvn) \;\le\; \mu(\Xset)\, \eMMSE(\Xvn)
  \end{equation}
  Moreover, if $\XX$ has a Lipschitz boundary and satisfies an interior
  cone condition, then there exists $C_2 > 0$ such that
  \begin{equation}
    \label{rate:2}
    \inf_{\Xvn \in \Acal_n^0}\, \eIMSE (\Xvn)
    \;\le\; \mu(\Xset)\, \inf_{\Xvn \in \Acal_n^0} \eMMSE (\Xvn) 
    \;\le\; C_2\, n^{-2\nu / d} \,.
  \end{equation}
  The optimal rate of decay is therefore $n^{-2\nu/d}$ for both
  criteria.
\end{prop}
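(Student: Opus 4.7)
The plan is to prove three separate statements: (i)~the trivial middle inequality $\eIMSE \le \mu(\XX)\,\eMMSE$, which follows by integrating the pointwise bound $\sigma_n^2(x) \le \eMMSE(\Xvn)$ against $\mu$; (ii)~the lower bound $\eIMSE(\Xvn) \ge C_1 n^{-2\nu/d}$ valid for \emph{every} $\Xvn \in \Acal_n^0$; and (iii)~the existence of a non-adaptive strategy realising $\eMMSE \le C_2 n^{-2\nu/d}$. Steps~(ii) and~(iii) both exploit the identification $\sigma_n(x) = \sup_{f \in \HH_1} (f - \Shat_n f)(x)$ provided by Proposition~\ref{prop:mmse-unit-ball}, combined with the Sobolev characterisation of $\HH$ in Proposition~\ref{prop:regul-assumpt}.

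For~(ii), I would use a bump-function argument. Fix a smooth profile $\phi \in C_c^\infty(\RR^d)$ with $\phi(0)=1$ and $\mathrm{supp}\,\phi \subset B(0,1)$, and for $h > 0$ and $x \in \XX$ define $\phi_{h,x}(y) \defeq \phi((y-x)/h)$. A change of variables in the Fourier representation given by Proposition~\ref{prop:regul-assumpt}, with $\tilde\Phi(u)^{-1} \le c_1^{-1}(1+\ns{u}_2^2)^s$ from~\eqref{eq:sobolev_cond}, yields $\ns{\phi_{h,x}}_\HH \le K h^{-\nu}$ for all $h \in (0,1]$ and all $x$; the scaling $h^{(d-2s)/2} = h^{-\nu}$ comes from the high-frequency mass of a width-$h$ bump. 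Setting $\psi_{h,x} \defeq \phi_{h,x}/\ns{\phi_{h,x}}_\HH \in \HH_1$, we get $\psi_{h,x}(x) \ge K^{-1} h^\nu$, and if $x$ is at Euclidean distance at least $h$ from every design point then $\psi_{h,x}$ vanishes at all $x_i$, so the minimum-$\HH$-norm interpolant is zero, i.e.\ $\Shat_n \psi_{h,x} \equiv 0$; by Proposition~\ref{prop:mmse-unit-ball} this forces $\sigma_n(x) \ge K^{-1} h^\nu$. Choosing $h = c_0 n^{-1/d}$ so that $\bigcup_i B(x_i, h)$ covers at most half the $\mu$-mass of $\XX$ (assuming $\mu$ has bounded density on $\XX$), the set of admissible $x$ carries $\mu$-mass at least $\mu(\XX)/2$, and integrating yields $\eIMSE(\Xvn) \ge C_1 n^{-2\nu/d}$ uniformly over $\Xvn \in \Acal_n^0$.

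For~(iii), Proposition~\ref{prop:mmse-unit-ball} rewrites $\eMMSE$ as the squared worst-case $L^\infty(\XX)$-error over $\HH_1$, and I would cite a classical scattered-data sampling inequality \citep[see, e.g.,][]{wendland05:_scatt_data_approx}: under the Lipschitz boundary and interior cone assumptions, any $g \in W_2^s(\XX)$ vanishing on a point set of sufficiently small fill distance $h$ satisfies $\ns{g}_{L^\infty(\XX)} \le C h^{s-d/2} \ns{g}_{W_2^s(\XX)} = C h^{\nu} \ns{g}_{W_2^s(\XX)}$. Apply this to $g = f - \Shat_n f$ for arbitrary $f \in \HH_1$: since kriging is the minimum-$\HH$-norm interpolant, $\ns{g}_\HH \le \ns{f}_\HH \le 1$, which by Proposition~\ref{prop:regul-assumpt} and the contractivity of restriction controls $\ns{g}_{W_2^s(\XX)}$ up to a constant. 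A quasi-uniform design on $\XX$ with fill distance $h \le C' n^{-1/d}$, which exists on any bounded domain meeting the stated regularity assumptions (e.g., a grid intersected with $\XX$ after a dilation), then gives $\eMMSE \le C_2 n^{-2\nu/d}$. The delicate ingredient is the Fourier scaling $\ns{\phi_{h,x}}_\HH \sim h^{-\nu}$ in~(ii); everything else is either elementary or a direct appeal to the scattered-data approximation literature.
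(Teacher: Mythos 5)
Your proposal is correct, and it splits into a part that mirrors the paper and a part that is genuinely different. For the upper bound (your step~(iii)) you are doing essentially what the paper does: the paper invokes Proposition~3.2 of Narcowich, Ward and Wendland, which is exactly the scattered-zeros sampling inequality you cite, applies it to the interpolation error, uses the norm equivalence of Proposition~\ref{prop:regul-assumpt}, and takes a space-filling design with fill distance $O(n^{-1/d})$; your extra intermediate step --- bounding $\ns{f-\Shat_n f}_{\HH}\le\ns{f}_{\HH}$ via the orthogonal-projection (Pythagoras) property before passing to the $W_2^s(\XX)$ norm --- is a correct way to put the error into the form required by the sampling inequality. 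For the lower bound (your step~(ii)) the paper simply cites Ritter's Proposition~8 for $\Xset=[0,1]^d$ and extends it to general $\Xset$ by inscribing a hypercube, whereas you give a self-contained fooling-function argument: scaled bumps $\phi_{h,x}$ with $\ns{\phi_{h,x}}_{\HH}\le Kh^{-\nu}$ (your Fourier scaling $h^{(d-2s)/2}=h^{-\nu}$ is right, and it uses the \emph{lower} bound in~\eqref{eq:sobolev_cond}, which the citation-based proof leaves implicit), vanishing at all design points whenever $x$ is $h$-separated from the design, so that $\sigma_n(x)\ge K^{-1}h^{\nu}$ on a set of $\mu$-mass at least $\mu(\XX)/2$ when $h=c_0n^{-1/d}$ with $c_0$ small. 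This buys a proof that is elementary and makes the pointwise mechanism behind the rate visible, at the price of the explicit extra hypothesis that $\mu$ has a bounded density (so the union of the $n$ balls has small $\mu$-mass); note that the paper's reduction to a sub-hypercube quietly needs a comparable assumption in the other direction ($\mu$ bounded below by a multiple of Lebesgue measure on some cube), so neither route is assumption-free and yours at least states what it uses. Both arguments correctly yield the stated rate $n^{-2\nu/d}$ for both criteria.
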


\begin{proof}
  It is proved in \citep[][Chapter~7,
  Proposition~8]{ritter:2000:average} that there exists $C_1 > 0$ such
  that $\eIMSE(\Xvn) \;\ge\; C_1\, n^{-2\nu/d}$ in the case where
  $\Xset = [0;1]^d$. This readily proves the lower
  bound~\eqref{eq:rate:1} since any~$\Xset$ with non-empty interior
  contains an hypercube on which Ritter's result holds.

  If $\XX$ is a bounded Lipschitz domain satisfying an interior cone
  condition, then
  \citep[][Proposition~3.2]{narcowich05:_solol_bound_funct_scatt_zeros}
  there exists $c_1 > 0$ such that $\ns{S(f) -
    \Shat_n(f)}_{L^{\infty}(\XX)} \leq c_1 h_n^{s-d/2}\,
  \ns{S(f)}_{W_2^s(\XX)}$ for all $f\in \HH$, where $h_n =
  \sup_{x\in\XX} \min_{i\in \{1,\ldots,n\} } \ns{x - X_i(f)}_2$ is the
  fill distance of the non-adaptive strategy $\Xvn$ in $\XX$. Therefore
  \begin{equation*}
    \ns{S(f) - \Shat_n(f)}_{L^{\infty}(\XX)} 
    \;\leq\; c_1 h_n^\nu\, \ns{S(f)}_{W_2^s(\XX)} 
    \;\leq\; c_1 h_n^\nu\, \ns{f}_{W_2^s(\RR^d)} 
    \;\leq\; c_2 h_n^\nu\, \ns{f}_\HH
  \end{equation*}
  for some $c_2 > 0$, using the equivalence of the Sobolev
  $W_2^s(\RR^d)$ norm with the RKHS norm (see
  Section~\ref{sec:kriging}). Considering any non-adaptive space-filling 
  strategy $\Xvn$ with a fill distance $h_n = O(n^{-1/d})$ yields 
  \begin{equation*}
    \inf_{\Xvn \in \Acal_n^0}\, \sup_{f \in \HH_1} \bns{ f - \Shat_n f
    }_{L^\infty(\XX)}
    \;\le\; c_3\, n^{-\nu/d}
  \end{equation*}
  for some $c_3 > 0$ and the upper-bound~\eqref{rate:2} then follows
  from Proposition~\ref{prop:mmse-unit-ball}.
\end{proof}

Finding a non-adaptive MMSE-optimal design is a difficult non-convex
optimization problem in $nd$ dimensions. Instead of addressing directly
such a high-dimensional global optimization problem, we can use the
classical sequential non-adaptive greedy strategy $\Xvn(\dotvar)=
\left( x_1, \ldots, x_n \right)\in\XX^n$ defined by
\begin{equation}
  \label{eq:greedy-MMSE}
  x_{i+1} \;=\; \argmax_{x\in\XX}\, \sigma^2\left( 
    x; x_1,\ldots,x_{i} \right) \,, \quad 1 \le i < n \,.
\end{equation}
Of course, the strategy is suboptimal but it only involves simpler
optimization problems in~$d$ dimensions and has the advantage that it
can be stopped at any time. 
Following
\cite{dahmen10:_conver_rates_greed_algor_reduc_basis_method}, it can
be established that this greedy strategy is rate optimal.

\begin{prop}
  Assume that $\xi$ has a continuous covariance function satisfying the
  regularity assumptions of Section~\ref{sec:kriging}, and let $\nu = s
  - d/2 > 0$.  Let $\Xvn$ be the sequential strategy defined
  by~(\ref{eq:greedy-MMSE}).  Then,
  \begin{equation*}
    \eMMSE (\Xvn) = {\rm O}(n^{2\nu/d}) \,.  
  \end{equation*}  
\end{prop}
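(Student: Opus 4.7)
The plan is to recast the greedy MMSE rule~(\ref{eq:greedy-MMSE}) as a strong greedy approximation algorithm in the RKHS~$\HH$ and to conclude by invoking the rate-preservation theorem of~\cite{dahmen10:_conver_rates_greed_algor_reduc_basis_method}.

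First I would reuse the isometry $\xi(x)\mapsto k(x,\dotvar)$ from the proof of Proposition~\ref{prop:mmse-unit-ball} to identify the MSE with a squared distance in~$\HH$:
\begin{equation*}
  \sigma^2(x;x_1,\ldots,x_n)
  \;=\; \bns{k(x,\dotvar)-P_{V_n}k(x,\dotvar)}_{\HH}^{2}\,,
\end{equation*}
where $V_n=\Span\{k(x_i,\dotvar):1\le i\le n\}$ and $P_{V_n}$ is orthogonal projection onto $V_n$. Under this identification, the rule~(\ref{eq:greedy-MMSE}) coincides with the strong greedy algorithm that selects, at each step, the element of the compact set $K\defeq\{k(x,\dotvar):x\in\XX\}\subset\HH$ that is farthest from the currently generated subspace~$V_n$. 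Compactness of~$K$ (from continuity of $k$ and compactness of~$\XX$) ensures that the $\argmax$ is attained and that Kolmogorov widths are well defined.

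Next I would bound the Kolmogorov $n$-width of~$K$ in~$\HH$. Proposition~\ref{prop:rate-mmse-crit} furnishes non-adaptive points $x_1^{\star},\ldots,x_n^{\star}\in\XX$ with $\sup_{x\in\XX}\sigma^{2}(x;x_1^{\star},\ldots,x_n^{\star})\le C_2\,n^{-2\nu/d}$; combined with the identity above, this yields $d_n(K)_{\HH}\le \sqrt{C_2}\,n^{-\nu/d}$.

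Finally, the rate-preservation theorem for strong greedy algorithms in Hilbert space asserts that if $d_n(K)_{\HH}\le M\,n^{-\alpha}$ for some $\alpha>0$, then the greedy worst-case error $\bar\sigma_n\defeq\sup_{f\in K}\mathrm{dist}_{\HH}(f,V_n^{\mathrm{greedy}})$ also satisfies $\bar\sigma_n\le C'\,n^{-\alpha}$. Specialising to $\alpha=\nu/d$ and squaring gives $\eMMSE(\Xvn)=\sup_{x}\sigma_n^{2}(x)=\bar\sigma_n^{2}=O(n^{-2\nu/d})$. The main obstacle is this last step: the polynomial-rate transfer from the Kolmogorov width to the strong greedy error is delicate and is precisely the central contribution of~\cite{dahmen10:_conver_rates_greed_algor_reduc_basis_method}; the first two steps are essentially bookkeeping on top of the RKHS isometry and the already-established Proposition~\ref{prop:rate-mmse-crit}.
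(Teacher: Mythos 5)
Your proposal is correct and follows essentially the same route as the paper: identify the greedy MMSE rule with the strong greedy algorithm on a compact set, bound the Kolmogorov $n$-width via the non-adaptive design of Proposition~\ref{prop:rate-mmse-crit}, and invoke the rate-preservation theorem of \cite{dahmen10:_conver_rates_greed_algor_reduc_basis_method}. The only cosmetic difference is that the paper applies the theorem to $\{\xi(x),\,x\in\XX\}$ in $L^2(\Omega,\Bcal,\P_0)$ while you transfer to $\{k(x,\dotvar),\,x\in\XX\}$ in $\HH$ via the isometry, which is the same argument; your version also spells out the width bound and the identification with the strong greedy algorithm, details the paper leaves implicit.
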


\begin{proof}
  Theorem~3.1 in
  \citet{dahmen10:_conver_rates_greed_algor_reduc_basis_method},
  applied to the compact subset $\{ \xi(x), x \in \Xset \}$ in
  $L^2(\Omega, \Bcal, \P_0)$, states that the greedy
  algorithm~\eqref{eq:greedy-MMSE} preserves polynomial rates of
  decay. The result follows from
  Proposition~\ref{prop:rate-mmse-crit}.
\end{proof}

\section{Optimization}
\label{sec:optim}

\def \eOPTwc {\epsilon_{\textsc{opt}^{\star}}}
\def \eOPTavg   {\epsilon_{\mkern3mu\overline{\mkern-3mu\textsc{opt}\mkern-3mu}\mkern3mu}}

In this section, we consider the problem of global optimization on a
compact domain $\XX\subset \RR^{d}$, which corresponds formally to
operators $S$ and $\Shat_n$ defined by $S(\xi) = \sup_{x \in \XX}
\xi(x)$ and $\Shat_n(\xi) = \max_{i\in{1,\ldots,n}} \xi(X_i(\xi))$. 

In a Bayesian setting, a classical criterion to assess
the performance of an optimization procedure is the average error
\begin{equation*}
  \eOPTavg(\Xvn) \defeq \EE(S(\xi) - \Shat_n(\xi))\,.
\end{equation*}
Although it may be not possible in the context of this article to make a
comprehensive review of known results concerning the average case in the
Gaussian case, it can be safely said however that such results are
scarce and specific.

In fact, most available results about the average-case error concern the
one-dimensional Wiener process $\xi$ on the interval $[0,1]$. Under this
setting, \cite{ritter1990approximation} shows that the average error of
the best non-adaptive optimization procedure decreases at rate
$n^{-1/2}$ \citep[extensions of this result for non-adaptive algorithms
and the $r$-fold Wiener measure can be found
in][]{wasilkowski1992average}. Under the same assumptions for $\xi$,
\cite{calvin1997average} derives the exact limiting distribution of the
error of a particular adaptive algorithm, which suggests that adaptivity
does yield a better average error for the optimization problem---the
result is that, for any $0<\delta<1$, it is possible to find an adaptive
strategy such that $n^{(1-\delta)}(S(\xi) - \Shat_n(\xi))$ converges
in distribution.

A theoretical result concerning the optimal average-error criterion
for less restrictive Gaussian priors is also available. If the
covariance of a Gaussian process $\xi$ is $\alpha$-H\"older continuous,
then \cite{grunewalder2010regret} show that  a space filling
strategy $\Xvn$ achieves
\begin{equation}
  \label{eq:grune}
  \eOPTavg(\Xvn) = {\rm O}( n^{-\alpha/(2d)} (\log n)^{1/2} )\,.
\end{equation}
Thus, under the assumptions of Section~\ref{sec:kriging}, for a Matérn
covariance with regularity parameter $\nu$, the rate of
the optimal average error of estimation of the optimum is less than
$n^{-\nu/d} (\log n)^{1/2}$ (since a Mat\'ern covariance is
$\alpha$-H\"older continuous with $\alpha = 2\nu$). Note that this
bound is not sharp in general since the optimal non-adaptive rate 
is $n^{-1/2}$ for the Brownian motion on~$[0;1]$, the covariance
function of which is $\alpha$-H\"older continuous with $\alpha = 1$.

In view of these results, we can safely say that characterizing the
average behavior of adaptive sequential optimization algorithms is still
an open (and apparently difficult) problem. At present, the only way to
draw useful conclusions about the interest of a particular optimization
algorithm is to resort to numerical simulations. Empirical studies such
as the one presented in \cite{benassi11:_robus_gauss_bayes} for instance
are therefore very useful from a practical point of view, since they
make it possible to obtain fine and sound performance assessments of any
strategy with a reasonable computational cost.

\bibliography{refs}

\include{figures}

\end{document}